\newtheorem{thm}{Theorem}
\newtheorem{prop}{Proposition}
\theoremstyle{definition}
\theoremstyle{remark}
\newtheorem{rem}{Remark}
\theoremstyle{remark}
\def\cM{\mathcal M}
\def\cH{\mathcal H}
\def\bR{\mathbb R} 
\def\bN{\mathbb N}
\def\bC{\mathbb C}
\def\bH{\mathbb H}
\def\Re{\operatorname{Re}}
\def\Im{\operatorname{Im}}
\def\fai{\varphi}
\def\lz{\langle}
\def\pz{\rangle}
\def\pa{\partial}
\def\nad{\overline}
\def\sl{\mathfrak{sl}}
\def\so{\mathfrak{so}}
\begin{document}

\title{Canonical bases for $\sl(2,\bC)$-modules of spherical monogenics in dimension 3}

\author{Roman L\' avi\v cka\thanks{The financial support from the grant GA 201/08/0397 is gratefully acknowledged.
This work is also a part of the research plan MSM 0021620839, which is financed by the Ministry of Education of the Czech Republic.}
\footnote{Mathematical Institute, Charles University, Sokolovsk\'a 83, 186 75 Praha 8, Czech Republic;
email: \texttt{lavicka@karlin.mff.cuni.cz}}
}

\date{}

\maketitle

\begin{abstract}
Spaces of homogeneous spherical monogenics in dimension 3 can be considered naturally as  $\sl(2,\bC)$-modules. As finite-dimensional irreducible $\sl(2,\bC)$-modules, they have canonical bases which are, by construction, orthogonal.
In this note, we show that these orthogonal bases form the Appell system and coincide with those constructed recently by S. Bock and K. G\"urlebeck in \cite{BG}. Moreover, we obtain simple expressions of elements of these bases in terms of the Legendre polynomials.
\end{abstract}

\medskip\noindent
{\small
{\bf Keywords:}  {Spherical monogenics, orthogonal basis, Legendre polynomials, $\sl(2,\bC)$-module}

\medskip\noindent
{\bf AMS classification:} 30G35, 33C50
}

\section{Introduction}

The main aim of this paper is to present an easy way to construct explicitly orthogonal bases for spaces of homogeneous spherical monogenics in dimension 3. 
Such bases were recently obtained by I. Ca\c c\~ao in \cite{cac} and  by S. Bock and  K. G\"urlebeck in \cite{BG}.
In \cite{cac}, orthogonal bases are constructed from systems of spherical monogenics which are obtained by applying the adjoint Cauchy-Riemann operator to elements of the standard bases of spherical harmonics.
In \cite{BG}, this idea is used for producing another orthogonal bases of spherical monogenics forming, in addition, the Appell system.  
In \cite{BGLS}, 
it is observed that these bases forming the Appell system can be seen as the so-called Gelfand-Tsetlin bases.
Moreover, in \cite{BGLS}, it is shown that the Gelfand-Tsetlin bases could be obtained in quite a~different way using the Cauchy-Kovalevskaya method
and a~characterization of the bases is given there. 
In \cite[Theorem 2.2.3, p. 315]{DSS}, the Cauchy-Kovalevskaya method was already explained. But this method is not used in \cite{DSS} for a~construction of orthogonal bases of spherical monogenics although the construction is obvious not only in dimension 3 but in an arbitrary dimension as well.       
Actually, in \cite[pp. 254-264]{DSS} and \cite{som,van}, another constructions even in all dimensions are given.
By the way, the Cauchy-Kovalevskaya method is applicable in other settings, see \cite{BDLS1,BDLS2} and \cite{DLS4}.
Finally, let us remark that
Appell systems of monogenic polynomials were discussed before
by H. R. Malonek et al. in \cite{CM06,CM07,FCM,FM}.  
Similar questions were also studied for the Riesz system, see \cite{GM06,GM07,GM09,del07,zei}.

For an account of Clifford analysis, we refer to \cite{DSS}. Now we introduce some notations.
Let $(e_1,\ldots,e_m)$ be the standard basis of the Euclidean space $\bR^m$
and let $\bC_m$ be the complex Clifford algebra generated by the vectors $e_1,\ldots,e_m$ such that
$e_j^2=-1$ for $j=1,\ldots,m.$
Recall that the Spin group $Spin(m)$ is defined as the set of products of an even number of unit vectors of $\bR^m$ endowed with the Clifford multiplication.
The Lie algebra $\so(m)$ of the group $Spin(m)$ can be realized as the space of bivectors of Clifford algebra $\bC_m,$
that is, $$\so(m)=\lz \{e_{ij}: 1\leq i<j\leq
m\}\pz.$$ Here $e_{ij}=e_ie_j$ and $\lz M\pz$ stands for the span of
a~set $M.$

Denote by $\cH_k(\bR^3)$ the space of complex valued harmonic polynomials $P$ in $\bR^3$ which are $k$-homogeneous. 
Then the space $\cH_k(\bR^3)$ of spherical harmonics is an irreducible module under the $h$-action,
defined by
$$
[h(s)(P)](x) = P(s^{-1}xs),\ s\in Spin(m)\text{\ \ and\ \ }x=(x_1,x_2,x_3)\in\bR^3.
$$
Moreover, let $S$ be a~basic spinor representation of the group $Spin(3).$ Then denote by $\cM_k(\bR^3,S)$ the set of $S$-valued $k$-homogeneous polynomials $P$ in $\bR^3$ which satisfy the equation $\pa P=0$
where the Dirac operator $\pa$ is given by
$$\pa=e_1\frac{\pa\ }{\pa x_1}+e_2\frac{\pa\ }{\pa x_2}+e_3\frac{\pa\ }{\pa x_3}.$$
It is well-known that the space $\cM_k(\bR^3,S)$ of spherical monogenics is an irreducible module under the  $L$-action,
defined by
$$
[L(s)(P)](x) = s\,P(s^{-1}xs),\ s\in Spin(m)\text{\ \ and\ \ }x=(x_1,x_2,x_3)\in\bR^3.
$$

Both spaces $\cH_k(\bR^3)$ and $\cM_k(\bR^3,S)$ can be seen naturally as irreducible finite-dimensional $\sl(2,\bC)$-modules. 
As finite-dimensional irreducible $\sl(2,\bC)$-modules, they have canonical bases which are, by construction, orthogonal.

In this paper, we study properties of canonical bases of spaces $\cM_k(\bR^3,S).$ 
In Theorem \ref{x-}, we describe their close relation to canonical bases of spherical harmonics, we show that they form the Appell system and we give recurrence formulas for their elements. By the way, in \cite{boc,BG09} analogous recurrence formulas generate easily elements of the orthogonal bases described in \cite{BG}.  
Moreover, we express elements of the canonical bases in terms of classical special functions (see Theorem	 \ref{sphGT3}). As in \cite{BGLS}, we can adapt these results easily to quaternion valued spherical monogenics. 
It turns out that these bases coincide with those constructed recently by S. Bock and K. G\"urlebeck in \cite{BG}.
In Theorem \ref{sphappell}, we obtain simple expressions of elements of these bases in terms of the Legendre polynomials.
Let us remark that in \cite{GM10a,GM10b} homogeneous solutions of the Riesz system in dimension 3 forming orthogonal bases are expressed as finite sums of products of the Legendre and Chebyshev polynomials.

\section{Spherical harmonics in dimension 3}

In this section, we recall the construction of canonical bases for finite-dimensional irreducible $\sl(2,\bC)$-modules and, as an example, we describe well-known bases of   
spherical harmonics in dimension 3 by means of classical special functions.

Obviously, the action of $\so(3)$ on the space $\cH_k(\bR^3)$ is given by
$$h_{ij}=dh(e_{ij}/2)=x_j\frac{\pa}{\pa x_i}-x_i\frac{\pa}{\pa x_j}\ \ \ (i\not=j).$$
Moreover, it is easily seen that 
$$[h_{12},h_{23}]=h_{31},\ \ \ [h_{23},h_{31}]=h_{12}\text{\ \ \ and\ \ \ }[h_{31},h_{12}]=h_{23}$$
where $[L,K]=LK-KL.$
We can naturally  identify the Lie algebra $\sl(2,\bC)$ with the complexification of $\so(3).$
Indeed, the operators
$$H=-ih_{12},\ \ X^{+}=h_{31}+ih_{23}\text{\ \ and\ \ }X^{-}=-h_{31}+ih_{23}$$ satisfy the standard $\sl(2,\bC)$-relations: 
$$[X^{+},X^{-}]=2H\text{\ \ \ and\ \ \ }[H,X^{\pm}]=\pm X^{\pm}.$$
Putting $z=x_1+ix_2$ and $\nad z=x_1-ix_2,$ we have that
\begin{equation}\label{Xpm}
X^{+}=-2x_3\frac{\pa}{\pa z}+\nad z\frac{\pa}{\pa x_3}\text{\ \ and\ \ }
X^{-}=2x_3\frac{\pa}{\pa\nad z}- z\frac{\pa}{\pa x_3}.
\end{equation}
Furthermore, it is well-known that, as an $\sl(2,\bC)$-module, $\cH_k(\bR^3)$ is irreducible and has the highest weight $k.$ In each finite-dimensional irreducible $\sl(2,\bC)$-module there exists always a~canonical basis consisting of weight vectors, see \cite[p. 116]{BtD}.

\begin{prop}\label{basis_sl2}
Let $V_l$ be an irreducible $\sl(2,\bC)$-module with the highest weight $l.$ 
Then 

\medskip\noindent
(i) There is a~primitive element $f_0$ of $V_l,$ that is, there is a~non-zero element $f_0$ of $V_l$ such that 
$$Hf_0=l f_0\text{\ \ and\ \ }X^+f_0=0.$$ 
 
\medskip\noindent
(ii) A~basis of $V_l$ is formed by the elements
$$f_j=(X^{-})^jf_0,\ \ j=0,\ldots, 2l.$$  
In addition, 
for each $j=0,\ldots,2l,$ 
the element $f_j$ is a~weight vector with the weight $l-j,$ that is, 
$f_j$ is a~non-zero element of $V_l$ such that $$Hf_j=(l-j)f_j.$$ 
Moreover, $X^-f_{2l}=0$ and each weight vector $f_j$ is uniquely determined up to a~non-zero multiple.

\medskip\noindent
(iii) The basis $\{f_0,\ldots,f_{2l}\}$ is orthogonal with respect to any inner product $(\cdot,\cdot)$ on $V_l$ which is invariant, that is, 
for each $L\in\sl(2,\bC)$ and each $f,g\in V_l,$ we have that $$(Lf,Lg)=(f,g).$$   
\end{prop}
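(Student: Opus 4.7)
The plan is to specialize the classical highest-weight construction for $\sl(2,\bC)$ (the textbook reference already cited above the proposition provides the details). For part (i), take any $H$-eigenvector in $V_l$, which exists since $V_l$ is finite-dimensional over $\bC$. Because $[H,X^+]=X^+$, the operator $X^+$ strictly raises the $H$-eigenvalue by $1$, so iterated application of $X^+$ to a weight vector must eventually give zero; the last nonzero iterate is a primitive vector, and its weight equals $l$ by the hypothesis that $V_l$ has highest weight $l$.

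For part (ii), I would set $f_j=(X^-)^j f_0$ and show by induction on $j$, using $[H,X^-]=-X^-$, that $Hf_j=(l-j)f_j$, so $f_j$ is a weight vector of weight $l-j$. A parallel induction based on $[X^+,X^-]=2H$ yields the explicit recursion $X^+ f_j = j(2l-j+1)\,f_{j-1}$. Consequently $\lz f_0,f_1,\ldots\pz$ is stable under $H$ and $X^\pm$, and by irreducibility it coincides with $V_l$. Nonzero weight vectors with distinct weights are linearly independent, and a comparison with $\dim V_l=2l+1$ forces exactly $f_0,\ldots,f_{2l}$ to be nonzero with $f_{2l+1}=X^- f_{2l}=0$. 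Uniqueness of each $f_j$ up to a scalar follows from one-dimensionality of each weight space, itself a consequence of the explicit recursion for $X^+ f_j$.

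For part (iii), differentiating the stated invariance at the identity of the corresponding one-parameter group yields the infinitesimal identity $(Lf,g)+(f,L^\ast g)=0$, where $L^\ast$ denotes the Hermitian adjoint with respect to $(\cdot,\cdot)$; one then checks that $H^\ast=H$ and $(X^\pm)^\ast=X^\mp$. In particular $H$ is self-adjoint, and its eigenvectors corresponding to distinct real eigenvalues are orthogonal. Since $f_0,\ldots,f_{2l}$ have the pairwise distinct weights $l,l-1,\ldots,-l$, the basis is orthogonal. The only genuinely delicate step is this passage from the stated (group-type) invariance to the adjoint relations among $H$ and $X^\pm$; everything else is routine bookkeeping, with the combinatorial coefficient $j(2l-j+1)$ being the one item worth tracking carefully.
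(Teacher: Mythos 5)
The paper never proves this proposition: it is stated as background and referred to \cite[p.~116]{BtD}, so there is no internal argument to compare against. Your reconstruction is the standard highest-weight argument for $\sl(2,\bC)$ and is correct in substance, including the key coefficient: one checks $[X^+,(X^-)^j]=\sum_{i=0}^{j-1}(X^-)^i\,2H\,(X^-)^{j-1-i}$, and applying this to $f_0$ indeed gives $X^+f_j=j(2l-j+1)f_{j-1}$. One small improvement for (ii): invoking $\dim V_l=2l+1$ is mildly circular if the highest weight is your only hypothesis; the cleaner route, which your own recursion already provides, is to let $n$ be the largest index with $f_n\neq 0$ and read off $0=X^+f_{n+1}=(n+1)(2l-n)f_n$, forcing $n=2l$ and yielding the dimension as a byproduct.

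In (iii) there is a genuine slip in the displayed identity, though not in the method. As written, $(Lf,g)+(f,L^*g)=0$ with $L^*$ the Hermitian adjoint is vacuous nonsense: by definition of the adjoint it collapses to $2(Lf,g)=0$. What differentiating the $Spin(3)$-invariance along one-parameter subgroups actually gives is $(Lf,g)+(f,Lg)=0$, i.e.\ skew-adjointness $L^*=-L$, and only for $L$ in the compact real form $\so(3)=\lz h_{12},h_{23},h_{31}\pz$ --- not for all of $\sl(2,\bC)$. The relations $H^*=H$ and $(X^\pm)^*=X^\mp$ then follow by extending conjugate-linearly in the adjoint, e.g.\ $H^*=(-ih_{12})^*=i\,h_{12}^*=-ih_{12}=H$, using that the inner product is Hermitian. (Your instinct to reinterpret the paper's loosely stated condition $(Lf,Lg)=(f,g)$, $L\in\sl(2,\bC)$, through one-parameter groups is the right reading; the paper's formulation is itself group invariance written with algebra letters.) With that identity corrected, the rest of (iii) --- $H$ self-adjoint with the pairwise distinct real eigenvalues $l,l-1,\ldots,-l$, hence orthogonal eigenvectors --- goes through exactly as you say.
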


By Proposition \ref{basis_sl2}, to construct the canonical basis of the module $\cH_k(\bR^3)$ it is sufficient to find its primitive.  

\begin{prop}\label{GT3harm}
The irreducible $\sl(2,\bC)$-module $\cH_k(\bR^3)$ has a~basis consisting of the polynomials
$$f^k_0=\frac{1}{k!\; 2^k  }\; \nad z^k\text{\ \ \ and\ \ \ }f_j^k=(X^{-})^jf^k_0,\ \ 0<j\leq 2k.$$
In addition, for each $j=0,\ldots,2k,$ 
the polynomial $f_j^k$ is a~weight vector with the weight $k-j,$ that is, 
$Hf_j^k=(k-j)f_j^k.$ 
\end{prop}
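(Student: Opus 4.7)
The plan is to invoke Proposition \ref{basis_sl2} applied to the module $V_l=\cH_k(\bR^3)$ with $l=k$. Since this module is irreducible of highest weight $k$ (as stated in the paragraph before Proposition \ref{basis_sl2}), it suffices to verify that the candidate $f_0^k=\frac{1}{k!\,2^k}\overline z^{\,k}$ is a primitive element: a non-zero vector of $\cH_k(\bR^3)$ satisfying $Hf_0^k=k\,f_0^k$ and $X^+f_0^k=0$. Once this is established, part (ii) of Proposition \ref{basis_sl2} automatically furnishes the basis $\{f_j^k=(X^-)^jf_0^k:0\le j\le 2k\}$ of weight vectors with weights $k-j$.

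The four required checks are all routine. First, $f_0^k$ is a $k$-homogeneous polynomial in $(x_1,x_2,x_3)$, and it is harmonic: since $\overline z^{\,k}$ does not depend on $x_3$ and the Cauchy--Riemann operator $\partial_z=\tfrac12(\partial_{x_1}-i\partial_{x_2})$ annihilates $\overline z^{\,k}$, one gets $\Delta f_0^k=4\partial_z\partial_{\overline z}f_0^k+\partial_{x_3}^2f_0^k=0$. Second, using the formulas \eqref{Xpm}, both terms of $X^+=-2x_3\partial_z+\overline z\,\partial_{x_3}$ vanish on $f_0^k$, since $f_0^k$ depends neither on $z$ nor on $x_3$. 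Hence $X^+f_0^k=0$.

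Third, for the weight, one computes directly
\begin{equation*}
(x_2\partial_{x_1}-x_1\partial_{x_2})\overline z^{\,k}=k(x_2+ix_1)\overline z^{\,k-1}=ik\,\overline z^{\,k},
\end{equation*}
so that $Hf_0^k=-ih_{12}f_0^k=k\,f_0^k$. This gives both the primitivity condition and the weight statement for $j=0$; the weight statement for general $j$ follows from $[H,X^-]=-X^-$, iteratively pushing the weight down by $1$.

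I do not anticipate any real obstacle here: the argument is essentially a verification that $\overline z^{\,k}$ is the standard highest-weight vector, with the normalization constant $\frac{1}{k!\,2^k}$ being merely a convenient choice (foreshadowing the Appell-type identities proved later in the paper). The only point requiring mild care is consistency of sign conventions in \eqref{Xpm} and in the definition of $H=-ih_{12}$, which is why I would spell out the computation of $Hf_0^k$ explicitly as above rather than rely on a character-theoretic shortcut.
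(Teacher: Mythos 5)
Your proposal is correct and follows exactly the paper's route: the paper's entire proof is the one-line assertion that $f^k_0$ is a primitive of $\cH_k(\bR^3)$, with Proposition \ref{basis_sl2} then supplying the basis and weights. You merely spell out the routine verifications (harmonicity, $X^+f^k_0=0$, $Hf^k_0=kf^k_0$) that the paper leaves implicit, and all of your computations check out.
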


\begin{proof}
It is easy to see that $f^k_0$ is a~primitive of $\cH_k(\bR^3).$ 
\end{proof}

Following \cite{BtD}, we identify the functions $f^k_j$ with classical special functions. To do this we use spherical co-ordinates
\begin{equation}\label{sph}
x_1=r\sin\theta\sin\fai,\ \
x_2=r\sin\theta\cos\fai,\ \ 
x_3=r\cos\theta  
\end{equation}
with $0\leq r,$ $-\pi\leq\fai\leq \pi$ and $0\leq\theta\leq\pi.$
Let us remark that, in  spherical co-ordinates (\ref{sph}), the operators $H,$ $X^+$ and $X^-$ have the form
\begin{equation}\label{HXsph}
\begin{array}{l}
H=-i\frac{\pa\ }{\pa\fai}, \medskip\\{}
X^+=e^{i\fai}\left(i\frac{\pa\ }{\pa\theta}-\cot\theta\frac{\pa\ }{\pa\fai}\right), \medskip\\{}
X^-=e^{-i\fai}\left(i\frac{\pa\ }{\pa\theta}+\cot\theta\frac{\pa\ }{\pa\fai}\right).
\end{array}
\end{equation}
In \cite[pp. 120-121]{BtD} (with the variables $x_1$ and $x_2$ interchanged), the next result is shown.  

\begin{prop}\label{sphGT3harm}
Let $\{f^k_0,\ldots,f^k_{2k}\}$ be the basis of $\cH_k(\bR^3)$ defined in Proposition \ref{GT3harm}.
Using  spherical co-ordinates (\ref{sph}), we have then that, for each $j=0,\ldots,2k,$
$$f^k_j(r,\theta,\fai)=i^{k-j} r^k e^{i(k-j)\fai}P^{j-k}_k(\cos \theta)\text{\ \ where} $$
$$P^l_k(s)=\frac{1}{k!\; 2^k  }\; (1-s^2)^{l/2}\frac{d^{l+k}}{ds^{l+k}}\;(s^2-1)^k,\ s\in\bR.$$
Here $P^0_k$ is the $k$-th Legendre polynomial and $P^l_k$ are its associated Legendre functions.
\end{prop}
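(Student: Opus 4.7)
The plan is induction on $j$, using $f^k_j = (X^-)^j f^k_0$ from Proposition \ref{GT3harm} together with the explicit spherical-coordinate formulas (\ref{HXsph}) for $X^-$.

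For the base case $j=0$, I would first convert $\nad z = x_1-ix_2$ to spherical form via (\ref{sph}): one finds $\nad z = -ir\sin\theta\,e^{i\fai}$, so that $f^k_0 = \frac{(-i)^k}{k!\,2^k}\,r^k\sin^k\theta\,e^{ik\fai}$. A direct evaluation of the stated Rodrigues-type formula at $l=-k$ gives $P^{-k}_k(\cos\theta) = \frac{(-1)^k}{k!\,2^k}\sin^k\theta$, and the identity $(-i)^k = (-1)^k i^k$ then confirms $f^k_0 = i^k r^k e^{ik\fai}P^{-k}_k(\cos\theta)$.

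For the inductive step, assume the formula for $j-1$ and apply $X^- = e^{-i\fai}(i\pa_\theta + \cot\theta\,\pa_\fai)$. The factor $r^k$ passes through, the prefactor $e^{-i\fai}$ shifts the phase from $e^{i(k-j+1)\fai}$ to $e^{i(k-j)\fai}$, the $\pa_\fai$ term contributes $i(k-j+1)\cot\theta\,P^{j-1-k}_k(\cos\theta)$, and the $\pa_\theta$ term contributes $-i\sin\theta\,(P^{j-1-k}_k)'(\cos\theta)$. Collecting and using $i^{k-j+2}=-i^{k-j}$, the entire induction reduces to the differentiation identity
\[
P^{j-k}_k(s) = \sqrt{1-s^2}\,\frac{d}{ds}P^{j-1-k}_k(s) - \frac{(k-j+1)\,s}{\sqrt{1-s^2}}\,P^{j-1-k}_k(s).
\]
I would deduce this directly from the Rodrigues definition: writing $u_n(s)=\frac{d^n}{ds^n}(s^2-1)^k$, so that $P^{n-k}_k(s)=\frac{(1-s^2)^{(n-k)/2}}{k!\,2^k}\,u_n(s)$, the product rule applied to $(1-s^2)^{(j-1-k)/2}u_{j-1}(s)$ produces exactly two terms: the derivative of the power factor, after multiplication by $\sqrt{1-s^2}$, cancels the subtracted cotangent contribution, while the other term $u_j(s)$ reassembles into $P^{j-k}_k(s)$.

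The main obstacle is purely clerical bookkeeping: the base case juggles three independent sign conventions (for $\nad z$ in spherical coordinates, for the powers of $i$, and for the Rodrigues normalization of $P^l_k$ at negative $l$), and the inductive step requires careful handling of half-integer powers of $1-s^2$. Since the authors remark that this is essentially \cite[pp.\ 120--121]{BtD} with $x_1$ and $x_2$ interchanged, an equally clean route would simply be to cite that reference and verify that the coordinate swap merely permutes the $x_1,x_2$ dependence without altering the final $(r,\theta,\fai)$ formula.
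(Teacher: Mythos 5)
Your proposal is correct, and every step checks out: with the paper's convention (\ref{sph}) one indeed has $\nad z=-ir\sin\theta\,e^{i\fai}$, the Rodrigues-type formula gives $P^{-k}_k(\cos\theta)=(-1)^k\sin^k\theta/(k!\,2^k)$, so the base case holds, and your inductive step reduces, exactly as you say, to the identity
\[
\sqrt{1-s^2}\,\frac{d}{ds}\Bigl[(1-s^2)^{l/2}u_{l+k}(s)\Bigr]+\frac{ls}{\sqrt{1-s^2}}\,(1-s^2)^{l/2}u_{l+k}(s)=(1-s^2)^{(l+1)/2}u_{l+k+1}(s)
\]
with $u_n(s)=\frac{d^n}{ds^n}(s^2-1)^k$ and $l=j-1-k$ (so that $k-j+1=-l$), which is immediate from the product rule in the way you describe: the derivative of the power factor cancels the cotangent term and the surviving $u_{l+k+1}$ reassembles into $P^{j-k}_k$. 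However, this is not the paper's route, because the paper gives no proof of Proposition \ref{sphGT3harm} at all: it states, in the sentence preceding the proposition, that the result is shown in \cite[pp.~120--121]{BtD} with the variables $x_1$ and $x_2$ interchanged --- precisely the alternative you flag in your closing paragraph. In substance the cited source's argument is the same lowering-operator induction you carry out, so your write-up is best read as supplying in full the computation the paper delegates to the reference. What your version buys is that the three sign conventions you identify are pinned down explicitly inside the paper's own normalizations (in particular the factor $i^{k-j}$ and the behaviour of $P^l_k$ at negative $l$, which are exactly the places where convention mismatches with the literature would otherwise go unnoticed); what the citation buys is brevity, at the cost of the reader verifying the dictionary between the two coordinate conventions, since interchanging $x_1$ and $x_2$ sends $z\mapsto i\nad z$ and $\nad z\mapsto -iz$ and one must check this leaves the final $(r,\theta,\fai)$ formula in the stated form.
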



\section{Spherical monogenics in dimension 3}

In this section, we study properties of canonical bases of $\sl(2,\bC)$-modules of spherical monogenics and, in particular, we express elements of these bases by means of classical special functions. We begin with spinor valued spherical monogenics. 

\paragraph{Spinor valued polynomials}

In what follows,  $S$ stands for a~(unique up to equivalence) basic spinor representation of $Spin(3)$ and $\so(3)=\lz e_{12}, e_{23}, e_{31}\pz.$
As an $\so(2)$-module, the module $S$ is reducible and decomposes into two inequivalent irreducible submodules 
$$S^{\pm}=\{u\in S: -ie_{12}\;u=\pm u\}$$
provided that $\so(2)=\lz e_{12}\pz.$ 
Moreover, the spaces $S^{\pm}$ are both one-dimensional. 
Let $S^{\pm}=\lz v^{\pm}\pz.$ Then
each $s\in S$ is of the form $s=s^{+}v^{+}+s^{-}v^{-}$ for some complex numbers $s^{\pm}.$ We write $s=(s^{+},s^{-}).$ 

Furthermore, the action of $\so(3)$ on the space $\cM_k(\bR^3,S)$ is given by
$$L_{ij}=dL(e_{ij}/2)=\frac{e_{ij}}{2}+h_{ij}\text{\ \ with\ \ }h_{ij}=x_j\frac{\pa}{\pa x_i}-x_i\frac{\pa}{\pa x_j}\ \ \ (i\not=j).$$
It is easily seen that 
$$[L_{12},L_{23}]=L_{31},\ \ \ [L_{23},L_{31}]=L_{12}\text{\ \ \ and\ \ \ }[L_{31},L_{12}]=L_{23}.$$
Moreover, the operators
$$\tilde H=-iL_{12},\ \ \tilde X^{+}=L_{31}+iL_{23}\text{\ \ and\ \ }\tilde X^{-}=-L_{31}+iL_{23}$$ generate the Lie algebra $\sl(2,\bC).$ Indeed, we have that 
$$[\tilde X^{+},\tilde X^{-}]=2\tilde H\text{\ \ \ and\ \ \ }[\tilde H,\tilde X^{\pm}]=\pm \tilde X^{\pm}.$$
Put again $z=x_1+ix_2$ and $\nad z=x_1-ix_2.$ Then it is easy to see that 
$$\tilde X^{\pm}=X^{\pm}+\omega^{\pm}
\text{\ \ where\ \ } 
\omega^+=\frac 12(e_{31}+ie_{23}),\ \ \
\omega^-=\frac 12(-e_{31}+ie_{23})$$
and $X^{\pm}$ are defined as in (\ref{Xpm}). 
Furthermore, as an $\sl(2,\bC)$-module, $\cM_k(\bR^3,S)$ is irreducible and has the highest weight $k+\frac 12.$ We can construct again a~canonical basis of this module using Proposition \ref{basis_sl2}.

\begin{prop}\label{GT3}
The irreducible $\sl(2,\bC)$-module $\cM_k(\bR^3,S)$ has a~basis consisting of the polynomials
$$F^k_0=\frac{1}{k!\; 2^k  }\; \nad z^k v^+\text{\ \ \ and\ \ \ }F_j^k=(\tilde X^{-})^jF^k_0,\ \ 0<j\leq 2k+1.$$
In addition, for each $j=0,\ldots,2k+1,$ 
the polynomial $F_j^k$ is a~weight vector with the weight $k+\frac 12-j,$ that is, 
$\tilde H F_j^k=(k+\frac 12-j)F_j^k.$ 
\end{prop}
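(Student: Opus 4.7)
The strategy is to apply Proposition~\ref{basis_sl2} to $V_l=\cM_k(\bR^3,S)$ with $l=k+\tfrac12$, the highest weight already recorded in the excerpt. Once $F^k_0$ is shown to be a primitive element of this module, the description of the basis $\{F^k_j\}_{j=0}^{2k+1}$ and the weight identifications follow immediately. So the whole task reduces to three checks: that $F^k_0\in\cM_k(\bR^3,S)$, that $\tilde H F^k_0=(k+\tfrac12)F^k_0$, and that $\tilde X^+ F^k_0=0$.

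The common underlying fact I would prove first is the identity $(e_1-ie_2)v^+=0$, which is not quite spelled out in the excerpt. A direct Clifford computation using $e_j^2=-1$ and $e_ie_j=-e_je_i$ gives
$$(e_1-ie_2)^2 = -e_1^2+e_2^2-i(e_1e_2+e_2e_1) = 0 \mand e_{12}(e_1-ie_2)=i(e_1-ie_2).$$
The second relation implies that $(e_1-ie_2)v^+$ lies in the one-dimensional subspace $S^+=\lz v^+\pz$, and the first then forces it to be zero.

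With this identity at hand the three conditions for primitivity are routine. Since $\nad z^k$ is independent of $x_3$, only the $e_1\pa_1+e_2\pa_2$ part of the Dirac operator contributes, and
$$\pa F^k_0 = \frac{1}{k!\,2^k}\,k\,\nad z^{k-1}(e_1-ie_2)v^+ = 0,$$
so $F^k_0\in\cM_k(\bR^3,S)$. Writing $\tilde H = H-\tfrac{i}{2}e_{12}$ and using $H\nad z^k = k\nad z^k$ (from Proposition~\ref{GT3harm}) together with $-\tfrac{i}{2}e_{12}v^+=\tfrac12 v^+$ (from the definition of $v^+$) yields $\tilde H F^k_0 = (k+\tfrac12)F^k_0$. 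Finally, $X^+\nad z^k=0$ by the explicit form (\ref{Xpm}), while rewriting $\omega^+=\tfrac12(e_{31}+ie_{23})=\tfrac12 e_3(e_1-ie_2)$ shows that $\omega^+ v^+=0$ as well, so $\tilde X^+F^k_0 = X^+F^k_0+\omega^+F^k_0=0$.

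I do not expect any real obstacle: the whole argument is a short verification reducing to the one Clifford-algebra identity $(e_1-ie_2)v^+=0$. Once that is in place, Proposition~\ref{basis_sl2} does all of the remaining work.
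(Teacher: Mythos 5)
Your proposal is correct and takes essentially the same route as the paper: the paper's entire proof is the one-line remark that $F^k_0$ is a primitive of $\cM_k(\bR^3,S)$, after which Proposition~\ref{basis_sl2} together with the already-stated irreducibility and highest weight $k+\frac 12$ does the rest. Your verification via the identity $(e_1-ie_2)v^+=0$ merely fills in what the paper calls ``obvious'' (note a harmless sign slip in your expansion of $(e_1-ie_2)^2$, which should read $e_1^2-e_2^2-i(e_1e_2+e_2e_1)$ but vanishes either way).
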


\begin{proof}
Obviously, the polynomial $F^k_0$ is a~primitive of $\cM_k(\bR^3,S).$ 
\end{proof}

By Proposition \ref{basis_sl2}, the basis of $\cM_k(\bR^3,S)$ constructed in Proposition \ref{GT3} is orthogonal with 
respect to any invariant inner product on $\cM_k(\bR^3,S).$ As is well-known, the Fischer inner product and the standard $L^2$-inner product on the unit ball of $\bR^3$ are examples of invariant inner products on $\cM_k(\bR^3,S),$ see \cite[pp. 206 and 209]{DSS}.
In the next theorem, we show further properties of the constructed bases.
Statement (a) of Theorem \ref{x-} shows the close relation of the canonical bases of spherical harmonics to those of spherical monogenics. Moreover, by statement (b), the polynomials $F^k_j$ form the so-called Appell system, that is, they satisfy the property (\ref{appellsystem}) below. Finally, statement (c) of Theorem \ref{x-} contains the recurrence formula for elements $F^k_j$ of the constructed bases.  

\begin{thm}\label{x-}
(a) 
We have that $$(\tilde X^-)^j=(X^-)^j+j(X^-)^{j-1}\omega^-,\ \ j\in\bN.$$
In particular, for $j=0,\ldots,2k+1,$ we get that
$F^k_j=f^k_jv^+ + jf^k_{j-1}\omega^-v^+.$
Here $f^k_{-1}=f^k_{2k+1}=0$ and $\{f^k_0,\ldots,f^k_{2k}\}$ is the basis of $\cH_k(\bR^3)$ as in Proposition \ref{GT3harm}.

\medskip\noindent
(b) Moreover, it holds that 
$$[\frac{\pa}{\pa x_3}, (\tilde X^-)^j]=2j(\tilde X^-)^{j-1}\frac{\pa}{\pa \nad z},\ \ j\in\bN.$$ 
In particular, for each $k\in\bN,$ 
\begin{equation}\label{appellsystem}
\frac{\pa F^k_j}{\pa x_3}
=\left\{
\begin{array}{ll}
j\; F^{k-1}_{j-1},&\ \ \ j=1,\ldots,2k;\medskip\\{} 
0,&\ \ \ j=0,2k+1. 
\end{array}
\right. 
\end{equation}

\noindent
(c) Finally, we have that $$[x_3, (\tilde X^-)^j]=j(\tilde X^-)^{j-1}z,\ \ j\in\bN.$$ 
In particular, 
for each $k\in\bN_0$ and $j=0,1,\ldots, 2k+1,$
$$F^{k+1}_{j+1}=x_3F^k_j-jzF^k_{j-1}+\omega^- F^{k+1}_j\text{\ \ where\ \ }F^k_{-1}=0.$$
\end{thm}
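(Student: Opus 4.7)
The plan is to exploit the splitting $\tilde X^-=X^-+\omega^-$, in which $X^-$ is a differential operator in the polynomial variables $(x_1,x_2,x_3)$ while $\omega^-$ is a constant Clifford element. Consequently $X^-$ and $\omega^-$ commute, and all three parts of the theorem reduce to short commutator calculations.

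For (a), I would first verify by a direct Clifford calculation that $(\omega^-)^2=0$: using $e_{31}^2=e_{23}^2=-1$ and $e_{31}e_{23}=-e_{12}=-e_{23}e_{31}$, both the diagonal and the cross terms in the square of $\tfrac12(-e_{31}+ie_{23})$ cancel. Because $X^-$ commutes with the nilpotent element $\omega^-$, the binomial expansion of $(X^-+\omega^-)^j$ has only two surviving terms, giving $(\tilde X^-)^j=(X^-)^j+j(X^-)^{j-1}\omega^-$. Applying this to $F_0^k=f_0^k v^+$ and using $(X^-)^i f_0^k=f_i^k$ from Proposition~\ref{GT3harm} yields the asserted formula for $F_j^k$.

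For (b), I would compute $[\pa/\pa x_3,\tilde X^-]$ from (\ref{Xpm}): only the term $2x_3\,\pa/\pa\nad z$ contributes, giving $2\,\pa/\pa\nad z$. Since $\pa/\pa\nad z$ commutes with $x_3$, $z$, $\omega^-$, and itself, it commutes with $\tilde X^-$, so an easy induction produces $[\pa/\pa x_3,(\tilde X^-)^j]=2j(\tilde X^-)^{j-1}\pa/\pa\nad z$. Applying both sides to $F_0^k$ (which is independent of $x_3$) together with the obvious identity $\pa F_0^k/\pa\nad z=\tfrac12 F_0^{k-1}$ gives $\pa F_j^k/\pa x_3=j F_{j-1}^{k-1}$ for $1\le j\le 2k$. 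The boundary case $j=2k+1$ follows from the dimension count $\dim\cM_{k-1}(\bR^3,S)=2k$, which forces $(\tilde X^-)^{2k}F_0^{k-1}=0$.

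For (c), the analogous computation gives $[x_3,\tilde X^-]=z$, and $z$ commutes with $\tilde X^-$, so induction yields the commutator identity. The cleanest route to the recurrence is then: first verify the base case $F_1^{k+1}=X^-F_0^{k+1}+\omega^-F_0^{k+1}=x_3 F_0^k+\omega^-F_0^{k+1}$ by differentiating the explicit formula for $F_0^{k+1}$; then apply $(\tilde X^-)^j$ to both sides, use the commutator identity to push $x_3$ past $(\tilde X^-)^j$ (producing the correction term $-jz\,F^k_{j-1}$), and use $[\tilde X^-,\omega^-]=0$ to bring $\omega^-$ through. No step presents a serious obstacle; the theorem is essentially a bookkeeping exercise in commutators, and the one mildly delicate point is the boundary dimension count in part (b).
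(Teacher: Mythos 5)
Your proposal is correct and takes essentially the same route as the paper: the same splitting $\tilde X^-=X^-+\omega^-$, the same key facts $(\omega^-)^2=0$, $[X^-,\omega^-]=0$, $[\frac{\pa}{\pa x_3},X^-]=2\frac{\pa}{\pa \nad z}$, $[x_3,X^-]=z$ driving the inductions, and the same base relation $F^{k+1}_1=x_3F^k_0+\omega^-F^{k+1}_0$ for the recurrence in (c). Your explicit weight/dimension argument for the boundary case $j=2k+1$ in (b) merely spells out what the paper leaves implicit in Proposition \ref{basis_sl2}(ii).
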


\begin{proof}
The statements (a) and (b) follow, by induction, from the following facts:
$$(\omega^-)^2=0,\ \ [X^-,\omega^- ]=0,\ \ [\frac{\pa}{\pa x_3},X^-]=2\frac{\pa}{\pa \nad z}\text{\ \ and\ \ }[\frac{\pa}{\pa \nad z},X^-]=0.$$
We show statement (c). We have that $[x_3,X^-]=z$ and $[z,X^-]=0$ and hence, by induction, 
we get easily $$[x_3, (\tilde X^-)^j]=j(\tilde X^-)^{j-1}z.$$
In particular, for $j= 1,\ldots,2k,$ we have that
$$x_3F^k_j-(\tilde X^-)^j(x_3F^k_0)=jzF^k_{j-1},$$
which finishes the proof together with the obvious relation
$$F^{k+1}_1=x_3F^k_0+\omega^- F^{k+1}_0.\mbox{\qedhere}$$
\end{proof}

\begin{rem}\label{Fpm} (a) 
We can realize the space $S$ in the Clifford algebra $\bC_4.$ Indeed, we can put
$$v^{+}=\frac 14(1-ie_{12})(1-ie_{34})\text{\ \ \ and\ \ \ }v^{-}=\frac 14(e_1+ie_2)(e_3+ie_4).$$
We denote this realization of the space $S$ by $S^+_4.$
In particular, we have that $\omega^- v^{+}=v^{-}$ and $\omega^- v^{-}=0.$

\medskip\noindent
(b) There is another realization $S^-_4$ of the space $S$ inside $\bC_4$ if we put 
$$v^{+}=\frac 14(1-ie_{12})(e_3+ie_4)\text{\ \ \ and\ \ \ }v^{-}=\frac 14(e_1+ie_2)(1-ie_{34}).$$
In this case, we have that $\omega^- v^{+}=-v^{-}$ and $\omega^- v^{-}=0.$
Let us remark that although, as $\so(3)$-modules, $S^+_4$ and  $S^-_4$ are of course equivalent to each other they are different as $\so(4)$-modules. See \cite[pp. 114-118]{DSS} for details.

\medskip\noindent
(c) Let $\{F^{k,\pm}_0,\ldots,F^{k,\pm}_{2k+1}\}$ be the basis of $\cM_k(\bR^3,S^{\pm}_4)$ defined in Proposition \ref{GT3}. 
By statement ($a$) of Theorem \ref{x-}, it is easy to see that, for $j=0,\ldots,2k+1,$ we get 
$$F^{k,\pm}_j=(f^k_j,\;\pm jf^k_{j-1}).$$ 
Here $\{f^k_0,\ldots,f^k_{2k}\}$ is the basis of $\cH_k(\bR^3)$ defined in Proposition \ref{GT3harm}.
\end{rem}

Using the observation (c) of Remark \ref{Fpm} and Proposition \ref{sphGT3harm}, we can easily express the functions $F^{k,\pm}_j$ in terms of classical special functions.

\begin{thm}\label{sphGT3}
Let $\{F^{k,\pm}_0,\ldots,F^{k,\pm}_{2k+1}\}$ be the basis of $\cM_k(\bR^3,S^{\pm}_4)$ defined in Proposition \ref{GT3}.
Using spherical co-ordinates (\ref{sph}), we then have that
$$F^{k,\pm}_j(r,\theta, \fai)=i^{k-j}r^ke^{i(k-j)\fai}\;(P^{j-k}_k(\cos \theta),\;\pm ije^{i\fai} P^{j-k-1}_k(\cos \theta))$$
for each $j=0,\ldots,2k+1.$ Here $P^{k+1}_k=0=P^{-k-1}_k.$
\end{thm}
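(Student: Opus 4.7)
The plan is to reduce directly to the harmonic case. By Remark \ref{Fpm}(c), each basis element splits as
$$F^{k,\pm}_j = (f^k_j,\;\pm j f^k_{j-1}),$$
with the conventions $f^k_{-1}=f^k_{2k+1}=0$ from Theorem \ref{x-}(a). So the theorem reduces to plugging the spherical-coordinate formula from Proposition \ref{sphGT3harm} into each component and rewriting everything with the same prefactor $i^{k-j} r^k e^{i(k-j)\fai}$.

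For the first component, Proposition \ref{sphGT3harm} directly gives $f^k_j = i^{k-j} r^k e^{i(k-j)\fai} P^{j-k}_k(\cos\theta)$, which is already in the required form. For the second component, the same proposition yields
$$\pm j f^k_{j-1} = \pm j\, i^{k-j+1} r^k e^{i(k-j+1)\fai} P^{j-k-1}_k(\cos\theta).$$
Factoring out $i^{k-j} r^k e^{i(k-j)\fai}$ leaves exactly $\pm ij e^{i\fai} P^{j-k-1}_k(\cos\theta)$, matching the stated formula.

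The only remaining thing to check is consistency at the extremes $j=0$ and $j=2k+1$. At $j=0$, the second component should vanish because $f^k_{-1}=0$; this is compatible with the formula precisely under the convention $P^{-k-1}_k=0$. At $j=2k+1$, the first component should vanish because $f^k_{2k+1}=0$; this is handled by $P^{k+1}_k=0$. Since both conventions are explicitly declared in the statement, no additional argument is needed.

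There is no real obstacle here: once Remark \ref{Fpm}(c) and Proposition \ref{sphGT3harm} are in hand, the proof is a one-line substitution together with a book-keeping check at the two boundary indices. The only mild subtlety is keeping careful track of the shift $j \mapsto j-1$ in the second slot, which is what produces the extra factor $ie^{i\fai}$ after pulling out the common prefactor.
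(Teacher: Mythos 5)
Your proof is correct and is precisely the argument the paper intends: the text preceding the theorem states that it follows from Remark \ref{Fpm}(c) combined with Proposition \ref{sphGT3harm}, which is exactly your substitution $F^{k,\pm}_j=(f^k_j,\;\pm jf^k_{j-1})$ together with factoring out $i^{k-j}r^ke^{i(k-j)\fai}$ from the shifted second slot. Your boundary check at $j=0$ and $j=2k+1$ via the conventions $P^{k+1}_k=0=P^{-k-1}_k$ is the same bookkeeping the paper leaves implicit.
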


Now we are going to deal with quaternion valued spherical monogenics.

\paragraph{Quaternion valued polynomials}

In what follows, $\bH$ stands for the skew field of real quaternions
$q$ with the imaginary units $i_1,$ $i_2$ and $i_3,$ that is,
$$i_1^2=i_2^2=i_3^2=i_1i_2i_3=-1\text{\ \ and\ \ }q=q_0+q_1i_1+q_2i_2+q_3i_3, (q_0,q_1,q_2,q_3)\in\bR^4.$$
For a~quaternion $q,$ put $\nad q=q_0-q_1i_1-q_2i_2-q_3i_3.$
We realize $\bH$ as the subalgebra of complex
$2\times 2$ matrices of the form
\begin{equation}\label{Hmatrix}
q=\begin{pmatrix}
q_0+iq_3 & -q_2+iq_1\\
q_2+iq_1 & q_0-iq_3
\end{pmatrix}.
\end{equation}
In particular, we have that
$$i_1=\begin{pmatrix}
0 & i\\
i & 0
\end{pmatrix},\ \ \
i_2=\begin{pmatrix}
0 & -1\\
1 & 0
\end{pmatrix}\text{\ \ \ and\ \ \ }
i_3=\begin{pmatrix}
i & 0\\
0 & -i
\end{pmatrix}.$$

Furthermore, we identify $\so(3)$ with $\lz i_1,i_2,i_3\pz$ as follows: $e_{12}\simeq i_3,$ $e_{23}\simeq i_1$ and $e_{31}\simeq i_2.$
Then we can realize the basic spinor representation $S$ of $\so(3)$ as the space $\bC^2$ of
column vectors
$$s=\begin{pmatrix}
q_0+iq_3 \\
q_2+iq_1
\end{pmatrix}
.$$
Here the action of $\so(3)$ on $S$ is given by the matrix multiplication from
the left.

Now we are interested in quaternion valued polynomials $g=g(y)$ in the
variable $y=(y_0,y_1,y_2)$ of $\bR^3.$
Let us denote by $\cM_k(\bR^3,\bH)$ the space of $\bH$-valued $k$-homogeneous
polynomials $g$ satisfying the Cauchy-Riemann
equation $Dg=0$ with
$$D=\frac{\pa\ }{\pa y_0}+i_1\frac{\pa\ }{\pa y_1}+i_2\frac{\pa\ }{\pa y_2}.$$
It is easy to see that both columns of an element $g\in\cM_k(\bR^3,\bH)$ belong to the space $\tilde\cM_k(\bR^3,S)$ 
of $S$-valued solutions $h$ of the equation $Dh=0$ which are $k$-homogeneous. 

Moreover, we can consider naturally $\cM_k(\bR^3,\bH)$ as a~right $\bH$-linear
Hilbert space with the $\bH$-valued inner product
$$(Q,R)_{\bH}=\int_{B_3}\nad QR\;dV$$
where $B_3$ is the unit ball and $dV$ is the Lebesgue measure in $\bR^3.$ 
In \cite{BG}, orthogonal bases of spaces $\cM_k(\bR^3,\bH)$ forming, in addition, the Appell system are constructed.
In \cite{BGLS}, 
the following characterization of these bases is given.     

\begin{prop}\label{appell}
For each $k\in\bN_0,$ there exists an orthogonal basis
\begin{equation}\label{Hbases}
\{g^k_j|\ j=0,\ldots,k\}
\end{equation}
of the right $\bH$-linear
Hilbert space $\cM_k(\bR^3,\bH)$ such that:

\medskip\noindent
(i) Let $j=0,\ldots,k$ and let $h^k_j$ and $h^k_{2k+1-j}$ be the first and the second column of the (matrix valued) polynomial $g^k_j,$
respectively. Then we have that 
$$Hh_j^k=(k+\frac 12-j)h^k_j\text{\ \ \ and\ \ \ }Hh^k_{2k+1-j}=-(k+\frac 12-j)h^k_{2k+1-j}\text{\ \ \ with}$$
$$H=-i(\frac{i_3}{2}+y_2\frac{\pa\ }{\pa y_1}-y_1\frac{\pa\ }{\pa y_2}).$$

\noindent
(ii) We have that $$\frac{\pa g^k_j}{\pa y_0} =\left\{
\begin{array}{ll}
k g^{k-1}_{j-1},&\ \ \ j=1,\ldots,k;\medskip\\{} 0,&\ \ \
j=0.
\end{array}
\right. $$

\noindent
(iii) For each $k\in\bN_0,$ we have that $g^k_0=(y_1-i_3y_2)^k.$

\medskip\noindent
Moreover, the polynomials $g^k_j$ are determined uniquely by the conditions (i), (ii) and (iii). Finally, for each $k\in\bN_0,$ the $S$-valued polynomials $$h^k_0,\ h^k_1,\ldots, h^k_{2k+1}$$ form the canonical basis of the $\sl(2,\bC)$-module $\tilde\cM_k(\bR^3,S).$
\end{prop}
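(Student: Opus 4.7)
The plan is to realise the bases $\{g^k_j\}$ as a rearrangement of the canonical $\sl(2,\bC)$-basis of a related module of $S$-valued monogenics. First I would observe that $\tilde\cM_k(\bR^3,S)$ is an irreducible $\sl(2,\bC)$-module of highest weight $k+\frac 12$ --- this mirrors the Dirac case treated above, with the Cauchy-Riemann operator $D$ replacing $\pa$ and with $y_0$ playing the distinguished role analogous to $x_3$. Proposition~\ref{basis_sl2} then applies once a primitive element is exhibited. A direct computation shows that $h^k_0=((y_1-iy_2)^k,0)^T$ satisfies $Dh^k_0=0$, $Hh^k_0=(k+\frac 12)h^k_0$ and $\tilde X^+h^k_0=0$, where $\tilde X^{\pm}$ denote the raising and lowering operators obtained from $L_{23},L_{31}$ via the identifications $e_{12}\simeq i_3$, $e_{23}\simeq i_1$, $e_{31}\simeq i_2$; hence $h^k_0$ is a primitive.

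Setting $h^k_j:=c_{k,j}(\tilde X^-)^jh^k_0$ with scalars $c_{k,j}$ to be fixed, Proposition~\ref{basis_sl2} supplies the canonical basis $h^k_0,\ldots,h^k_{2k+1}$ of $\tilde\cM_k(\bR^3,S)$, each $h^k_j$ a weight vector of weight $k+\frac 12-j$. This already settles the final claim of the proposition, and the task reduces to pairing these $S$-valued polynomials into columns of $\bH$-valued polynomials $g^k_j$ and to verifying (i)--(iii).

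The main obstacle is showing that $(h^k_j,\,h^k_{2k+1-j})$ genuinely fits into the two columns of an $\bH$-valued polynomial of the matrix form~(\ref{Hmatrix}). That realisation forces the columns to be related by $(\alpha,\beta)^T\mapsto(-\nad\beta,\nad\alpha)^T$, so I must check that this conjugation-plus-swap operation is compatible with the index reflection $j\mapsto 2k+1-j$ along the $\sl(2,\bC)$-chain. The cleanest way is to exploit right multiplication by $i_2$, which is an $\bR$-linear involution of $\cM_k(\bR^3,\bH)$ that commutes with the $L$-action of $Spin(3)$ and swaps the two columns of a matrix with precisely the required sign and conjugation. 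Applied to the starting point $g^k_0=(y_1-i_3y_2)^k$ and transported through the $\sl(2,\bC)$-chain, it forces the scalars $c_{k,j}$ to assemble automatically into genuine matrix-valued polynomials.

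The three conditions are then verified in order: (iii) is the defining normalisation; (i) is immediate from the weight computations; and (ii) follows from an Appell relation analogous to Theorem~\ref{x-}(b), namely $[\pa/\pa y_0,(\tilde X^-)^j]=2j(\tilde X^-)^{j-1}\,\pa/\pa\nad w$ with $w=y_1+iy_2$, combined with $\pa h^k_0/\pa y_0=0$; the scalars $c_{k,j}$ must be chosen so that the resulting prefactor becomes $k$ rather than $2j$. Uniqueness follows from the rigidity of weight vectors in an irreducible $\sl(2,\bC)$-module: (i) pins each column down to a scalar, (iii) fixes the starting scalars and (ii) propagates them by induction on $k$. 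Finally, orthogonality of $\{g^k_j\}$ with respect to the $\bH$-valued $L^2$-inner product on $B_3$ reduces column-by-column to the orthogonality of the canonical basis of $\tilde\cM_k(\bR^3,S)$, which holds by Proposition~\ref{basis_sl2}(iii) since rotation invariance of Lebesgue measure on $B_3$ makes this inner product $L$-invariant.
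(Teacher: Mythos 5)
Your proposal is sound, but note first that this paper never proves Proposition \ref{appell} itself: it is quoted from \cite{BGLS}, and the closest in-paper argument is the proof of Theorem \ref{sphappell}, whose steps (a)--(d) actually construct polynomials satisfying (i)--(iii). The paper's route there is extrinsic: it transports the already-built Dirac-side canonical basis $F^{k,-}_j$ of Remark \ref{Fpm}(c) by the substitution $(x_1,x_2,x_3)\mapsto(-y_2,y_1,y_0)$, rescales by $c^k_j=(2i)^{k-j}k!/j!$ so that $h^k_0=((y_1-iy_2)^k,0)$ and $\pa h^k_j/\pa y_0=k\,h^{k-1}_{j-1}$, and assembles the matrix from the components of the single column via $g^k_j=\Re h^k_{j,0}+i_1\Im h^k_{j,1}+i_2\Re h^k_{j,1}+i_3\Im h^k_{j,0}$, the second column then being forced by the realization (\ref{Hmatrix}). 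You instead work intrinsically in the Cauchy--Riemann picture (primitive $((y_1-iy_2)^k,0)^T$, lowering chain, then the quaternionic structure map to generate the second column); in the end the two constructions yield the same matrices, since the conjugate-swap map $(\alpha,\beta)^T\mapsto(-\nad\beta,\nad\alpha)^T$ forced by (\ref{Hmatrix}) is exactly right multiplication by $i_2$. What your route buys is a self-contained treatment of the parts the paper delegates to \cite{BGLS}: the uniqueness induction via rigidity of weight vectors and the orthogonality reduction through Proposition \ref{basis_sl2}(iii); what the paper's transport buys is the explicit special-function formulas of Theorem \ref{sphGT3} essentially for free, which is the point of Theorem \ref{sphappell}. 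Two details you should tighten: right multiplication by $i_2$ squares to $-1$, so it is not an involution; and the property you actually need is not that it commutes with the $L$-action (true, but insufficient) but that it is $\bC$-antilinear on columns while commuting with the quaternionic matrices $i_1,i_2,i_3$ and with real differential operators, hence it \emph{anti}commutes with $H$ and swaps the raising and lowering operators --- this antilinearity is precisely what sends a weight vector of weight $k+\frac12-j$ to one of weight $-(k+\frac12-j)$ and implements the reflection $j\mapsto 2k+1-j$ along the chain, and it also makes condition (ii) for the second column automatic from the first (since $\pa/\pa y_0$ and the real scalar $k$ are preserved under the antilinear map). With those points made explicit, your verification of (i)--(iii), the normalization bookkeeping for the scalars $c_{k,j}$, and the column-by-column orthogonality argument all go through.
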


In \cite{BG} and \cite{BGLS}, quite explicit formulas for the polynomials $g^k_j$ are given in the cartesian coordinates $y_0,y_1,y_2.$ We now construct these polynomials in yet another way using Theorem \ref{sphGT3}.
Indeed, in Theorem \ref{sphappell} below, we express the polynomials $g^k_j$ in spherical co-ordinates
\begin{equation}\label{sph2}
y_0=r\cos\theta,\ \ 
y_1=r\sin\theta\cos\fai,\ \ 
y_2=r\sin\theta\sin\fai
\end{equation}
with $0\leq r,$ $-\pi\leq\fai\leq \pi$ and $0\leq\theta\leq\pi.$

\begin{thm}\label{sphappell}
Let the set $\{g^k_j|\ j=0,\ldots,k\}$ be the basis of $\cM_k(\bR^3,\bH)$ as in Proposition \ref{appell}. 
Using  spherical co-ordinates (\ref{sph2}), we have then that 
$$
g^k_{j}(r,\theta,\fai)=(k!/j!)(-2)^{k-j}r^k\;(g^k_{j,0}+g^k_{j,1}\;i_1+g^k_{j,2}\;i_2+g^k_{j,3}\;i_3)\text{\ \ where}
$$
$$
\begin{array}{ll}
g^k_{j,0}=P^{j-k}_k(\cos \theta)\cos (j-k)\fai,& 
g^k_{j,1}=-j P^{j-k-1}_k(\cos \theta) \cos (j-k-1)\fai,\medskip\\{} 
g^k_{j,2}=j P^{j-k-1}_k(\cos \theta)\sin (j-k-1)\fai,& 
g^k_{j,3}=P^{j-k}_k(\cos \theta)\sin (j-k)\fai.
\end{array}
$$ 
Here $P^0_k$ is the $k$-th Legendre polynomial and $P^l_k$ are its associated Legendre functions (see Proposition \ref{sphGT3harm} for the formulas of $P^l_k$).
\end{thm}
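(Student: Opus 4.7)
The plan is to reduce the statement to Theorem~\ref{sphGT3} via the characterization of $g^k_j$ in Proposition~\ref{appell} together with the matrix realization (\ref{Hmatrix}) of $\bH$. Under the axis relabelling $y_0\leftrightarrow x_3$, $y_1\leftrightarrow x_2$, $y_2\leftrightarrow x_1$, the two spherical coordinate systems (\ref{sph}) and (\ref{sph2}) agree on the same triple $(r,\theta,\fai)$, so passing between the two parts of the paper is just a permutation of axes.

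By Proposition~\ref{appell}(i), the first and second columns of the matrix $g^k_j$ are the weight vectors $h^k_j$ and $h^k_{2k+1-j}$ in the canonical $\sl(2,\bC)$-basis of $\tilde\cM_k(\bR^3,S)$, of weights $k+\tfrac 12-j$ and $-(k+\tfrac 12-j)$ respectively. Since $\tilde\cM_k(\bR^3,S)$ is irreducible, each such weight vector is unique up to a scalar, so each column of $g^k_j$ is a scalar multiple of the corresponding vector produced by Theorem~\ref{sphGT3}, once one identifies the $\bC^2$-column realization of $S$ with $S^\pm_4$ of Remark~\ref{Fpm} via $e_{12}\simeq i_3$, $e_{23}\simeq i_1$, $e_{31}\simeq i_2$.

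The overall normalization is pinned down by Proposition~\ref{appell}(iii): $g^k_0=(y_1-i_3y_2)^k$. In the coordinates (\ref{sph2}) this is $r^k\sin^k\theta(\cos\fai-i_3\sin\fai)^k=r^k\sin^k\theta(\cos k\fai-i_3\sin k\fai)$ by the $i_3^2=-1$ analogue of de~Moivre's identity. Comparing with the claimed formula at $j=0$ and using the Rodrigues-type evaluation $P^{-k}_k(\cos\theta)=(-1)^k\sin^k\theta/(k!\,2^k)$ produces exactly the prefactor $(k!/0!)(-2)^k r^k$, which matches $(k!/j!)(-2)^{k-j}$ at $j=0$. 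The corresponding factor for general $j$ then follows either by propagating the action of $\tilde X^-$ starting from $F^k_0$ (using the identities of Theorem~\ref{x-}(a)), or more directly by a short induction on $j$ from the Appell property of Proposition~\ref{appell}(ii) combined with the standard derivative recurrence for associated Legendre functions.

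The final assembly is mechanical: once both columns of $g^k_j$ are written explicitly in spherical form, the four real components $g^k_{j,0},\ldots,g^k_{j,3}$ are read off from (\ref{Hmatrix}) by separating the real and imaginary parts of the factors $e^{i(k-j)\fai}$ and $e^{i(k-j-1)\fai}$. I expect the main technical burden to be the bookkeeping between the two conventions, especially aligning the action of $\omega^-$ on the $\bC^2$-column vectors with its action on the $S^\pm_4$ realization from Remark~\ref{Fpm}; once that is done, the result reduces to direct substitution into Theorem~\ref{sphGT3}.
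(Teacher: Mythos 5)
Your overall strategy is the paper's own: reduce to Theorem~\ref{sphGT3} through the uniqueness clause of Proposition~\ref{appell}, pin the normalization by (ii)--(iii), and read the four real components off the matrix realization (\ref{Hmatrix}). But the step you dismiss as ``just a permutation of axes'' contains a genuine error, and it is exactly the point the paper's proof is careful about. The unsigned relabelling $x_1\leftrightarrow y_2$, $x_2\leftrightarrow y_1$, $x_3\leftrightarrow y_0$ does \emph{not} intertwine the two pictures: the paper uses $\hat h^k_j(y_0,y_1,y_2)=F^{k,-}_j(-y_2,y_1,y_0)$, with a minus sign on $y_2$, which in spherical coordinates is $\fai\mapsto-\fai$ (hence step (c) of the paper's proof reads $h^k_j(r,\theta,\fai)=c^k_j F^{k,-}_j(r,\theta,-\fai)$). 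You can see the sign is forced by comparing weights: the operator $H$ of Proposition~\ref{appell} has orbital part $y_2\frac{\pa}{\pa y_1}-y_1\frac{\pa}{\pa y_2}=-\frac{\pa}{\pa\fai}$ in coordinates (\ref{sph2}), so a first column of weight $k+\frac12-j$ must depend on $\fai$ as $e^{i(j-k)\fai}$, whereas your unsigned relabelling transports $F^{k,\pm}_j\propto e^{i(k-j)\fai}$ unchanged; the orbital parts of $H$ and $\tilde H$ then differ by a sign while the spin parts ($e_{12}\simeq i_3$) agree, so the weight correspondence breaks. Carried through, your version would flip the signs of $g^k_{j,2}$ and $g^k_{j,3}$ (and garble $g^k_{j,1}$), contradicting the statement. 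A second, smaller ambiguity: you write ``$S^\pm_4$'' as if the choice were free, but the identification $e_{12}\simeq i_3$, $e_{23}\simeq i_1$, $e_{31}\simeq i_2$ forces $\omega^-v^+=-v^-$, i.e.\ the realization $S^-_4$ of Remark~\ref{Fpm}(b); using $S^+_4$ reverses the sign of the second column and hence of $g^k_{j,1}$ and $g^k_{j,2}$.

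Your $j=0$ anchor is correct ($(y_1-i_3y_2)^k=r^k\sin^k\theta(\cos k\fai-i_3\sin k\fai)$ matching $k!(-2)^kP^{-k}_k(\cos\theta)=\sin^k\theta$), and your fallback of determining the scalars by induction on $j$ from Proposition~\ref{appell}(ii) would in principle self-correct the $\fai$-dependence if you computed with the actual operators in the $y$-variables rather than via the faulty transfer; but as written the proposal asserts the incorrect identification as a fact, and the Legendre recurrences would then be fitted against formulas with the wrong phase. Note also that the paper avoids your Legendre-recurrence induction entirely: it fixes all constants in one stroke as $c^k_j=(2i)^{k-j}k!/j!$ using the commutator identity of Theorem~\ref{x-}(b), then verifies (i)--(iii) for the resulting candidates and invokes the uniqueness of Proposition~\ref{appell}. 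So the gap to repair is concrete and local --- replace the plain axis permutation by the signed substitution $(x_1,x_2,x_3)=(-y_2,y_1,y_0)$, fix the realization to $S^-_4$, and the rest of your argument goes through essentially as the paper's does.
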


\begin{proof} 
(a) Let $k\in\bN_0$ and $j=0,\ldots,k.$ It is easy to see that  the $S$-valued polynomial
$$\hat h^k_j(y_0,y_1,y_2)=F^{k,-}_j(-y_2, y_1, y_0)$$
solves the equation $D\hat h^k_j=0.$
Here $F^{k,-}_j$ are as in Remark \ref{Fpm} (c).

\medskip\noindent
(b) We can find non-zero complex numbers
$c^k_j\in\bC$ such that the polynomials $h^k_j=c^k_j\hat h^k_j$ satisfy, in addition, that
$h^k_0=((y_1-iy_2)^k,0)$ and
$$
\frac{\pa h^k_j}{\pa y_0} =\left\{
\begin{array}{ll}
k h^{k-1}_{j-1},&\ \ \ j=1,\ldots,k;\medskip\\{}
0,&\ \ \ j=0.
\end{array}
\right.
$$
Indeed, by Theorem \ref{x-}, it is sufficient to put
$c^k_j=(2i)^{k-j} k!/j!.$

\medskip\noindent
(c) Using  spherical co-ordinates (\ref{sph2}), we obviously have that
$$h^k_j(r,\theta,\fai)=c^k_j\;F^{k,-}_j(r,\theta,-\fai)$$
where $F^{k,-}_j$ are as in Theorem \ref{sphGT3}.
In particular, putting $d^k_j=(k!/j!)(-2)^{k-j},$ we have that $h^k_j=(h^k_{j,0},h^k_{j,1})$ with
$$h^k_{j,0}=d^k_j r^k e^{i(j-k)\fai}P^{j-k}_k(\cos \theta)\text{\ \ and\ \ }
h^k_{j,1}= d^k_j r^k (-i)je^{i(j-k-1)\fai}P^{j-k-1}_k(\cos \theta).$$ 

\medskip\noindent
(d) Finally, we define an $\bH$-valued polynomial $g^k_j$ corresponding to the $S$-valued polynomial $h^k_j=(h^k_{j,0},h^k_{j,1})$ by
$$g^k_j=\Re h^k_{j,0}+i_1\Im h^k_{j,1}+i_2\Re h^k_{j,1}+i_3\Im h^k_{j,0}.$$
Here,
for a~complex number $z,$ we write $\Re z$ for its real part and $\Im z$ for its imaginary part.
Obviously, 
the polynomials $g^k_j$ satisfy the conditions (i), (ii) and (iii) of Proposition \ref{appell}, which easily completes the proof.
\end{proof}

\subsection*{Acknowledgments}
I am grateful to V. Sou\v cek for useful conversations.


\end{document}